\newtheorem{theorem}{Theorem}[section]
\theoremstyle{definition}
\newtheorem{definition}[theorem]{Definition}
\theoremstyle{remark}
\numberwithin{equation}{section}
\newcommand{\CL}{\mathcal{CL}}
\newcommand{\MWP}{\mathcal{MWP}}
\newcommand{\CB}{\mathcal{CB}}
\newcommand{\MT}{\mathcal{M}_T}
\newcommand{\hh}{\mathcal{H}}
\newcommand{\PP}{\mathcal{P}}
\begin{document}

\title{On the $(s,r)$-contractive multi-valued operator and affirmative answers to an open question}

\author{Farshid Khojasteh}
\address{Young Researcher and Elite Club, Arak Branch, Islamic Azad University, Arak, Iran.}
\curraddr{Department of Mathematics, Arak Branch, Islamic Azad University, Arak, Iran.}
\email{f-khojaste@iau-arak.ac.ir,fr\_khojasteh@yahoo.com}
\thanks{The author would like to thanks of Young Researcher and Elite Club which have supported the research.}


\subjclass[2010]{Primary 47H10}

\date{November 08, 2018}

\dedicatory{This paper is dedicated to Professor Ovidiu Popescu who suggested these worthwhile results}

\keywords{$(s,r)-$contractive multi-valued operator, Fixed point, Strict fixed point, Nadler fixed point, Pompeiu-Hausdorff metric}

\begin{abstract}
The current research is devoted to find an affirmative answer to an open question related to $(s,r)-$contractive operators which have been introduced by Ovidiu Popescu.
\end{abstract}

\maketitle
\section{Intoduction}
Let $(X,d)$ be a metric space, $\PP(X)$ the set of all nonempty subset of $X$, $\CB(X)$ denote the
classes of all non empty, closed and bounded subsets of $X$, and let $\CL(X)$ the set of all closed subsets of $X$. Let
$T:X \to \CB(X)$ be a multi-valued mapping on $X$. A point
$x\in X$ is called a fixed point of $T$ if $x\in Tx$. Let
$Fix(T)=\{x\in X:x\in Tx\}$. An element $x\in X$ is said to be an
{\bf strict fixed point}(endpoint) of $T$, if $Tx=\{x\}$. Let
$Fix(T)=\{x\in X:x\in Tx\}$(see \cite{1} for more detail).

Let $\hh$ be the Pompeiu-Hausdorff \cite{pamp} metric on $\CB(X)$ induced by $d$,
that is,
\begin{equation}\label{d3}
\hh(A,B):=\max\Big{\{}\sup_{x\in B}d(x,A),\sup_{x\in A}d(x,B)\Big{\}},
\quad A,B\in \CB(X).
\end{equation}
in which, $d(x,A)=\inf\{d(x,y):y\in A\}$

In 2007, Suzuki \cite{7} introduced the following theorem as a simple generalization of Banach contraction principle.
\begin{theorem}\label{Theorem1.2}
Let $(X,d)$ be a complete metric space and let $T$ be a mapping on $X$. Define a non increasing function $\theta$ from $[0,1)$ into $(\frac{1}{2},1]$ by
\[
\theta(r)=\left\{
\begin{array}{lll}
1 &&0\leqslant r <\frac{\sqrt{5}-1}{2}\\
\frac{1-r}{r^2} &&\frac{\sqrt{5}-1}{2}\leqslant r<\frac{1}{\sqrt{2}}\\
\frac{1}{1+r} && \frac{1}{\sqrt{2}}\leqslant r<1
\end{array}
\right.
\]
Assume that there exists $r\in[0,1)$ such that
\begin{equation}\label{eq2}
\theta(r)d(x,Tx)\leqslant d(x,y) \ \ \ \ implies \ \ \ \ d(Tx,Ty)\leqslant r d(x,y)
\end{equation}
for all $x,y\in X$. Then there exists a unique fixed point $z$ of $T$. Moreover, $\lim\limits_{n\to\infty}T^n(x)=z$, for all $x\in X$.
\end{theorem}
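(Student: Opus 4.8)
The plan is to run a Picard iteration and to show that its limit is the unique fixed point, the only genuine difficulty being the conditional nature of the contraction. First I would fix an arbitrary $x_0\in X$ and set $x_{n+1}=Tx_n$. Since $\theta(r)\le 1$ for every admissible $r$, we always have $\theta(r)\,d(x_n,Tx_n)\le d(x_n,Tx_n)=d(x_n,x_{n+1})$, so the hypothesis \eqref{eq2} applies to the pair $(x_n,x_{n+1})$ and yields $d(x_{n+1},x_{n+2})=d(Tx_n,Tx_{n+1})\le r\,d(x_n,x_{n+1})$. Iterating gives $d(x_n,x_{n+1})\le r^n d(x_0,x_1)$, so $\{x_n\}$ is Cauchy and, by completeness, converges to some $z\in X$.

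The delicate step is to prove $Tz=z$ \emph{without} assuming $T$ continuous. My first move would be to establish the auxiliary inequality
\[
 d(z,Tx)\le r\,d(z,x)\qquad \text{for every } x\in X \text{ with } x\ne z.
\]
This follows by passing \eqref{eq2} to the limit: when $d(z,x)>0$, the trigger $\theta(r)\,d(x_n,Tx_n)\le d(x_n,x)$ holds for all large $n$ (the left side tends to $0$ while the right side tends to $d(z,x)>0$), so $d(x_{n+1},Tx)\le r\,d(x_n,x)$, and letting $n\to\infty$ with continuity of the metric gives the claim. The main obstacle is then to upgrade this into $Tz=z$. Supposing $\delta:=d(z,Tz)>0$, applying \eqref{eq2} to the pair $(z,Tz)$ (legitimate since $\theta(r)\delta\le\delta$) gives $d(Tz,T^2z)\le r\delta$, while the auxiliary inequality at $x=Tz$ gives $d(z,T^2z)\le r\delta$; the triangle inequality then yields $\delta\le 2r\delta$. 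This already settles the case $r<\tfrac12$, but reaching the full range $r\in[0,1)$ requires sharpening these estimates along the orbit $\{T^kz\}$, and it is exactly here that the piecewise definition of $\theta$ is forced: the thresholds $\tfrac{\sqrt5-1}{2}$ and $\tfrac1{\sqrt2}$ mark where one estimate must be traded for another, so I expect the finish to split into the three cases $0\le r<\tfrac{\sqrt5-1}{2}$, $\tfrac{\sqrt5-1}{2}\le r<\tfrac1{\sqrt2}$ and $\tfrac1{\sqrt2}\le r<1$, with a tailored chain of inequalities in each. This case analysis is the part I expect to be genuinely delicate.

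Finally, uniqueness and global convergence are routine. If $z$ and $w$ are both fixed points, then $\theta(r)\,d(z,Tz)=0\le d(z,w)$, so \eqref{eq2} applies to $(z,w)$ and gives $d(z,w)=d(Tz,Tw)\le r\,d(z,w)$, forcing $z=w$ since $r<1$. Because the Picard construction above started from an arbitrary $x_0$ and its limit is necessarily this unique fixed point, we obtain $\lim_{n\to\infty}T^n x=z$ for every $x\in X$.
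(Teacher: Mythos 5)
First, a point of orientation: the paper you were given does not prove this statement at all --- Theorem \ref{Theorem1.2} is imported verbatim from Suzuki's 2008 paper (reference [7]) as background, so the comparison has to be with Suzuki's original argument. Your opening coincides with his: since $\theta(r)\le 1$ the trigger in \eqref{eq2} is automatic for the pair $(x_n,x_{n+1})$, the Picard orbit is Cauchy with limit $z$, and the auxiliary inequality $d(z,Tx)\le r\,d(z,x)$ for $x\ne z$ is obtained exactly as you describe by letting $n\to\infty$ in the triggered inequality. Your uniqueness and global-convergence paragraphs are also correct.

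The genuine gap is the step you yourself flag: proving $Tz=z$ on the whole range $r\in[0,1)$. Your estimate $\delta\le 2r\delta$ settles only $r<\tfrac12$, and you then merely announce that a ``tailored chain of inequalities'' in each of the three ranges should finish the job. That case analysis is not a routine tightening; it is the entire content of the theorem beyond the Banach principle, and it is exactly where the thresholds $\tfrac{\sqrt5-1}{2}$ and $\tfrac1{\sqrt2}$ are earned. To indicate the kind of work still owed: assuming $Tz\ne z$, one first checks $T^nz\ne z$ for all $n$ (otherwise $d(z,Tz)=d(T^nz,T^{n+1}z)\le r^n d(z,Tz)$, a contradiction), so your auxiliary inequality iterates to $d(T^nz,z)\le r^{n-1}d(Tz,z)$. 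For $\tfrac1{\sqrt2}\le r<1$ the triangle inequality gives $d(T^nz,T^{n+1}z)\le(1+r)\,d(T^nz,z)$, so the trigger $\theta(r)\,d(T^nz,T^{n+1}z)\le d(T^nz,z)$ holds with $\theta(r)=\tfrac1{1+r}$, yielding $d(T^{n+1}z,Tz)\le r\,d(T^nz,z)\to0$ and hence $d(z,Tz)=0$; but this computation fails for $r<\tfrac1{\sqrt2}$, where $\theta(r)(1+r)>1$, and Suzuki must argue differently there --- this is precisely why the branch $\tfrac{1-r}{r^2}$ appears. Until these cases are actually carried out, your argument proves the theorem only for $r<\tfrac12$, which is strictly weaker than the statement.
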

\begin{definition}
  Let $(X,d)$ be a metric space and $T:X\to\CL(X)$ a multi-valued operator. $T$ is called Multi-valued Weakly Picard operator ($\MWP$) if for all $x\in X$ and $y\in Tx$, there exists a sequence $\{x_n\}_{n}$ such that:
  \begin{itemize}
    \item[$(a_1)$] $x_0=x$, \ $x_1=y$,
    \item[$(a_2)$] $x_{n+1}\in Tx_n$, for all $n\geqslant 0$,
    \item[$(a_3)$] the sequence $\{x_n\}_n$ is convergent and its limit is a fixed point of $T$.
  \end{itemize}
\end{definition}
In 2013, Popescu \cite{popescu} introduced $(s,r)-$contractive multi-valued operators as follows:
\begin{definition}\cite[Definition 1.7]{popescu}
  Let $(X,d)$ be a complete metric space and let $T$ be a mapping from $X$ into $\CB(X)$. $T$ is called an $(s,r)-$contractive multi-valued operator if $r\in[0,1)$, $s\geqslant r$ and for every $x,y\in X$
  \[
  D(y,Tx)\leqslant s d(y,x) \ \ \ implies \ \ \ \hh(Tx,Ty)\leqslant r \MT(x,y)
  \]
  in which
  \begin{equation}\label{ewq3}
  \MT(x,y)=\max\left\{d(x,y),D(x,Tx),D(y,Ty),\frac{D(x,Ty)+D(y,Tx)}{2}\right\}
  \end{equation}
\end{definition}
After that, he presented the following results
\begin{definition}\label{d32}\cite[Definition 4.2]{popescu}
Let $(X,d)$ be a metric space, $Y\in P(X)$ and $T:X\to\CL(X)$ be a multi-valued operator. Then, we say that the fixed point problem is well-posed for $T$ with respect to $\hh$ if
\begin{enumerate}
  \item[$(a_1)$]$SFix(T)=\{z\}$;
  \item[$(a_2)$]If $\{x_n\}\in Y$, $n\in\mathbb{N}$ and $\hh(x_n,Tx_n)\to 0$ as $n\to\infty$, then $d(x_n,z)\to0$ as $n\to\infty$.
\end{enumerate}
\end{definition}
\begin{theorem}\label{2.3}\cite[Theorem 4.4]{popescu}
Let $(X,d)$ be a complete metric space and $T:X\to\CL(X)$ be a multi-valued operator. We suppose that:
\begin{enumerate}
  \item[$(i)$] $T$ is an $(s,r)-$contractive multi valued operator with $s\geqslant 1$;
  \item[$(ii)$] $SFix(T)\neq\emptyset$.
\end{enumerate}
Then:
\begin{enumerate}
  \item[$(a)$] $Fix(T)=SFix(T)=\{z\}$,
  \item[$(b)$] the fixed point problem is well-posed for $T$ with respect to $\hh$ if $s>1$.
\end{enumerate}
\end{theorem}
In Popescu's paper, the Theorem \ref{2.3} gave rise to the following open question:
\begin{theorem}(Open Question)\label{open1}
Let $(X,d)$ be a complete metric space and $T:X\to\CL(X)$ be a multi-valued operator. We suppose that:
\begin{enumerate}
  \item[$(i)$] $T$ is an $(1,r)-$contractive multi valued operator;
  \item[$(ii)$] $SFix(T)\neq\emptyset$.
\end{enumerate}
Then is the fixed point problem is well-posed for $T$ with respect to $\hh$ if $s=1$?
\end{theorem}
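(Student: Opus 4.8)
The plan is to argue that the answer is affirmative: the fixed point problem stays well-posed at the boundary value $s=1$. Since part $(a)$ of Theorem~\ref{2.3} already holds for every $s\geqslant 1$, I would take for granted that $Fix(T)=SFix(T)=\{z\}$, so that $Tz=\{z\}$, and concentrate entirely on the convergence requirement $(a_2)$ of Definition~\ref{d32}: if $e_n:=\hh(\{x_n\},Tx_n)=\sup_{u\in Tx_n}d(x_n,u)\to 0$, then $d(x_n,z)\to 0$.

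The key manoeuvre is to feed the strict fixed point $z$ into the \emph{first} slot of the $(1,r)$-contractive condition. First I would apply the contractivity to the pair $(x,y)=(z,x_n)$. Because $Tz=\{z\}$, the premise becomes $D(x_n,Tz)=d(x_n,z)\leqslant 1\cdot d(x_n,z)$, which is true with equality; this is precisely the place where admitting $s=1$ matters, since no strict inequality is required. The condition then delivers $\hh(\{z\},Tx_n)\leqslant r\,\MT(z,x_n)$.

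Next I would convert this into a linear inequality for $d(x_n,z)$ using only the triangle inequality. On the left, $\hh(\{z\},Tx_n)=\sup_{u\in Tx_n}d(z,u)$, and every $u\in Tx_n$ satisfies $d(z,u)\geqslant d(z,x_n)-d(x_n,u)\geqslant d(z,x_n)-e_n$, giving $\hh(\{z\},Tx_n)\geqslant d(z,x_n)-e_n$. On the right, the four terms of $\MT(z,x_n)$ obey $D(z,Tz)=0$, $D(x_n,Tx_n)\leqslant e_n$, $D(x_n,Tz)=d(x_n,z)$ and $D(z,Tx_n)\leqslant d(z,x_n)+e_n$, so that $\MT(z,x_n)\leqslant\max\{d(z,x_n)+\tfrac{e_n}{2},\,e_n\}$. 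Combining the bounds (the subcase $d(z,x_n)<e_n/2$ being immediate) leaves
\[
d(z,x_n)-e_n\leqslant r\Big(d(z,x_n)+\tfrac{e_n}{2}\Big),
\]
hence $d(z,x_n)\leqslant\frac{1+r/2}{1-r}\,e_n$. Since $r<1$ is fixed and $e_n\to 0$, this forces $d(x_n,z)\to 0$, which is exactly $(a_2)$.

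The hard part, and presumably the reason the case $s=1$ was posed as open, is the opening step itself: realizing that the contractive inequality is legitimately applicable with $x=z$. For $s>1$ the premise $D(x_n,Tz)\leqslant s\,d(x_n,z)$ holds with slack and gives more flexibility, whereas at $s=1$ it holds only with equality, so the entire estimate hinges on this single borderline invocation being valid. Once it is granted, the remaining metric estimates on $\hh(\{z\},Tx_n)$ and $\MT(z,x_n)$ are routine and the contraction factor $r<1$ closes the argument.
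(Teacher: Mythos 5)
Your argument is correct and is essentially the paper's own proof: both hinge on applying the $(1,r)$-contractive condition to the pair $(x,y)=(z,x_n)$, where the premise $D(x_n,Tz)=d(x_n,z)\leqslant d(x_n,z)$ holds with equality precisely because $s=1$ suffices, and then absorbing the resulting $r$-term via triangle-inequality estimates between $x_n$, $z$ and $Tx_n$. The only difference is bookkeeping: the paper first solves a self-referential inequality for $\hh(Tx_n,\{z\})$, obtaining $\hh(Tx_n,\{z\})\leqslant\tfrac{2r}{1-r}\hh(\{x_n\},Tx_n)$, and then bounds $d(x_n,z)$, whereas you solve directly for $d(x_n,z)$; the constants differ but the mechanism is identical.
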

\section{Main Results}
In this section, we want to give affirmative answer to the Open Question \ref{open1}.
\begin{theorem}(Answer to Open Question \ref{open1})\label{3.3}
Let $(X,d)$ be a complete metric space and $T:X\to\CL(X)$ be a multi-valued operator. We suppose that:
\begin{enumerate}
  \item[$(i)$] $T$ is an $(1,r)-$contractive multi valued operator;
  \item[$(ii)$] $SFix(T)\neq\emptyset$.
\end{enumerate}
Then:
\begin{enumerate}
  \item[$(a)$] $Fix(T)=SFix(T)=\{z\}$,
  \item[$(b)$] the fixed point problem is well-posed for $T$ with respect to $\hh$.
\end{enumerate}
\end{theorem}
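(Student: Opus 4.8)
The plan is to treat the two conclusions separately, exploiting the single structural feature that makes the boundary case $s=1$ tractable: if $z$ is a strict fixed point then $Tz=\{z\}$, so for \emph{every} $y\in X$ one has $D(y,Tz)=d(y,z)$, and hence the premise $D(y,Tz)\leqslant 1\cdot d(y,z)$ of the $(1,r)$-contractive condition holds \emph{with equality}. Thus the implication $\hh(Tz,Ty)\leqslant r\,\MT(z,y)$ is available for any $y$ we wish to plug in, and this is the whole leverage the proof needs.

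First I would establish $(a)$. Fix $z\in SFix(T)$. To see $SFix(T)=\{z\}$, let $w$ be another strict fixed point and apply the contractive inequality with $x=z$, $y=w$ (premise holds with equality); using $Tz=\{z\}$ and $Tw=\{w\}$ collapses $\hh(Tz,Tw)$ to $d(z,w)$ and $\MT(z,w)$ to $d(z,w)$, so $d(z,w)\leqslant r\,d(z,w)$ with $r<1$ forces $z=w$. For $Fix(T)=SFix(T)$, take any $w\in Fix(T)$, so $w\in Tw$ and $D(w,Tw)=0$; the same application gives $\hh(\{z\},Tw)\leqslant r\,\MT(z,w)$, where all entries of $\MT(z,w)$ are bounded by $d(z,w)$ (because $w\in Tw$), so $\MT(z,w)=d(z,w)$, while $\hh(\{z\},Tw)\geqslant\sup_{u\in Tw}d(u,z)\geqslant d(w,z)$ again since $w\in Tw$. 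Hence $d(z,w)\leqslant r\,d(z,w)$ and $w=z$.

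The substance is in $(b)$, and here $s=1$ superficially looks dangerous: the triggering inequality is a tight equality rather than a strict one, so one might fear the contraction can never be invoked along an approximating sequence. The resolution is exactly the observation above. Suppose $\{x_n\}$ satisfies $\varepsilon_n:=\hh(\{x_n\},Tx_n)=\sup_{u\in Tx_n}d(x_n,u)\to 0$, and write $a_n:=d(x_n,z)$. Applying the contractive condition with $x=z$, $y=x_n$ — whose premise $D(x_n,Tz)=d(x_n,z)\leqslant d(x_n,z)$ holds — gives $\hh(\{z\},Tx_n)\leqslant r\,\MT(z,x_n)$. I would then bound both sides by elementary triangle-inequality estimates: on one hand $\hh(\{z\},Tx_n)\geqslant\sup_{u\in Tx_n}d(u,z)\geqslant a_n-\varepsilon_n$; on the other, using $D(z,Tz)=0$, $D(x_n,Tx_n)\leqslant\varepsilon_n$, $D(x_n,Tz)=a_n$ and $D(z,Tx_n)\leqslant a_n+\varepsilon_n$, one gets $\MT(z,x_n)\leqslant a_n+\varepsilon_n$. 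Combining yields $a_n-\varepsilon_n\leqslant r(a_n+\varepsilon_n)$, i.e.\ $a_n\leqslant\frac{1+r}{1-r}\,\varepsilon_n$, whence $d(x_n,z)\to 0$. Together with $SFix(T)=\{z\}$ from $(a)$, this is precisely well-posedness in the sense of Definition \ref{d32}.

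The only real subtlety I anticipate is bookkeeping the point-to-set estimates when $Tx_n$ need not be bounded a priori (recall $T$ takes values in $\CL(X)$); but the hypothesis $\hh(\{x_n\},Tx_n)\to 0$ forces $\sup_{u\in Tx_n}d(x_n,u)<\infty$ for large $n$, so every quantity above is finite and the estimates are legitimate. Notably, no completeness or Picard-iteration machinery is required — unlike the existence theorems, both parts here are driven entirely by the endpoint equality $D(y,Tz)=d(y,z)$, which is exactly what lets the proof survive at $s=1$.
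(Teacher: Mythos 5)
Your proof is correct and, for part $(b)$, follows essentially the same route as the paper: both exploit the fact that $Tz=\{z\}$ makes the premise $D(x_n,Tz)\leqslant d(x_n,z)$ hold automatically at $s=1$, apply the contraction to the pair $(z,x_n)$, and extract $d(x_n,z)\leqslant\frac{1+r}{1-r}\,\hh(\{x_n\},Tx_n)$ by triangle-inequality bookkeeping (you isolate $d(x_n,z)$ directly via the lower bound $\hh(\{z\},Tx_n)\geqslant d(x_n,z)-\hh(\{x_n\},Tx_n)$, whereas the paper first bounds $\hh(Tx_n,\{z\})$ and then adds the triangle inequality, arriving at the identical constant). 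Your write-up of part $(a)$, which the paper defers to Popescu, is also correct.
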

\begin{proof}
Proving $(a)$ is analogous to the argument indicated in \cite[Theorem 4.4(a)]{popescu} and we remain it to the reader. To prove $(b)$, suppose that
$\{x_n\}\in Y$, $n\in\mathbb{N}$ and $\hh(x_n,Tx_n)\to 0$ as $n\to\infty$. We claim that $d(x_n,z)\to0$ as $n\to\infty$.

Since $D(x_n,Tz)=d(x_n,z)\leqslant d(x_n,z),(s=1)$, we have
\begin{equation}\label{4.1}
\hh(Tx_n,\{z\})=\hh(Tx_n,Tz)\leqslant r \MT(x_n,z).
\end{equation}
Also,
\begin{equation}\label{4.2}
  \begin{array}{lll}
    {\displaystyle \MT(x_n,z)}&=&{\displaystyle\max\left\{d(x_n,z),D(x_n,Tx_n),\frac{D(x_n,Tz)+D(z,Tx_n)}{2}\right\}} \\
     & \leqslant & {\displaystyle\max\left\{d(x_n,z),\hh(\{x_n\},Tx_n),\frac{d(x_n,z)+\hh(\{z\},Tx_n)}{2}\right\}}\\
     &\leqslant &  {\displaystyle\max\left\{d(x_n,z),\hh(\{x_n\},Tx_n),\frac{2d(x_n,z)+\hh(\{x_n\},Tx_n)}{2}\right\}}\\
     &=& {\displaystyle\max\left\{d(x_n,z),\hh(\{x_n\},Tx_n),d(x_n,z)+\hh(\{z\},Tx_n)\right\}}
  \end{array}
\end{equation}
(\ref{4.1}) and (\ref{4.2}) conclude that
\begin{equation}\label{4.3}
  {\displaystyle\hh(Tx_n,\{z\})\leqslant r (d(x_n,z)+\hh(\{x_n\},Tx_n))\leqslant r (\hh(\{z\},Tx_n)+2\hh(\{x_n\},Tx_n))}
\end{equation}
Therefore, (\ref{4.3}) concludes that
\[
\hh(Tx_n,\{z\})\leqslant \frac{2r}{1-r}\hh(\{x_n\},Tx_n).
\]
and so we have
\begin{equation}\label{4.6}
  \lim_{n\to\infty}\hh(Tx_n,\{z\})=0.
\end{equation}
Moreover,
\begin{equation}\label{4.7}
  d(x_n,z)\leqslant\hh(\{x_n\},Tx_n)+\hh(Tx_n,\{z\})\leqslant \frac{1+r}{1-r}\hh(\{x_n\},Tx_n).
\end{equation}
Applying (\ref{4.6}) and taking limit on both sides of (\ref{4.7}) and the fact that $\hh(\{x_n\},Tx_n)\to 0$ as $n\to\infty$, one can obtain desired result and the proof is completed.
\end{proof}

%

\bibliographystyle{amsplain}

\end{document}